\documentclass[a4paper,12pt,english,reqno]{amsart}
\usepackage[utf8]{inputenc}
\usepackage[main=english]{babel}
\usepackage{amsmath,amssymb,amsthm}
\usepackage{fullpage}
\usepackage{url}
\usepackage{hyperref}

\newtheorem{thm}{Theorem}

\newtheorem{lem}[thm]{Lemma}

\newtheorem{conj}[thm]{Conjecture}
\theoremstyle{definition}

\theoremstyle{remark}
\newtheorem{rem}[thm]{Remark}

\newcommand{\R}{{\mathbb R}}
     \title{A proof of Hare's $\epsilon$ unfair conjecture}

     \author[1]{Tho Nguyen Xuan}
     \address{Faculty of Mathematics and Informatics, Hanoi University of Science and Technology \\
Hanoi, Vietnam}
     \email{tho.nguyenxuan1@hust.edu.vn}
 \author[2]{Duc Hiep Pham}
 \address{University of Education, Vietnam National University, 144 Xuan Thuy, Cau Giay, Hanoi, Vietnam}
     \email{phamduchiep@vnu.edu.vn}
        \author{Huong Pham Lan}
     \address{Department of Mathematics, Hanoi Pedagogical University 2, Nguyen Van Linh Road, Xuan Hoa, Phuc Yen, \\
Phu Tho, Vietnam}
     \email{phamlanhuong@hpu2.edu.vn}
     \date{}
     \keywords{$\epsilon$-unfair conjecture, polynomials, 0--1 unfair conjectures}
     \subjclass{Primary: 26C10. Secondary: 12D10, 11C08}
 \begin{document}
     \begin{abstract} In this paper, we  prove Hare's $\epsilon$-unfair conjecture in
[Computational progress on the unfair 0--1 polynomial conjecture, 
Experiment. Math. (2025), \url{https://doi.org/10.1080/10586458.2025.2527786}].
     \end{abstract}
     \maketitle
     \section{Introduction}
   A 0--1 polynomial is a polynomial with only coefficients in $\{0,1\}$. The unfair 0--1 polynomial conjecture states the following:
   \begin{conj}\label{C1} If a 0--1 polynomial $c(x)$ is written as $c(x)=a(x)b(x)$ with $a(x),b(x)$ monic polynomials with nonnegative real coefficients, then all coefficients of $a(x)$ and $b(x)$ are $0$ or $1$.
   \end{conj}
   An equivalent form of Conjecture \ref{C1} is the following.
   \begin{conj}\label{C2}
Let $X$ and $Y$ be independent random variables taking finitely many values in $\{0,1,2,\ldots\}$. Suppose that $X+Y$ is uniform on the values it takes. Then $X$ and $Y$ are uniform as well.
   \end{conj}
According to Ghidelli \cite{Ghidelli} and Hare \cite{Hare},   Conjecture \ref{C2} was first stated by Letac in 1969. Conjecture \ref{C2} is also Problem 28  in the list of 100 open problems by Ben Green \cite{Green}. For the history  of Conjectures \ref{C1} and \ref{C2}, see \cite{Ghidelli} and the references therein. Conjecture \ref{C1} is still open and some partial results are known:
\begin{itemize}
\item 
Dvorsky \cite{Dvorsky} verified Conjecture \ref{C1} for 0--1 polynomials divisible by a polynomial of form $1+tx^{2}+x^{k}$ for all odd integers
$k\ge 341$ and real numbers $t\in (0,1)$;
\item Ghidelli \cite{Ghidelli} verified Conjecture \ref{C1} for 0--1 polynomials divisible by a polynomial of form $1+tx^{2}+x^{5}$ for all real numbers $t\in (0,1)$;
\item  Hare \cite{Hare} verified Conjecture \ref{C1} for almost all polynomials of degree up to $15$;
\item Zhang \cite{Zhang} verified Conjecture \ref{C1}  for all polynomials of degree up to $45$. \end{itemize}
In \cite{Hare}, Hare formulated the following conjecture.
\begin{conj}\cite[Conjecture~6.4]{Hare}\label{C3} For all $\epsilon>0$, there exists an $\epsilon$-unfair polynomial.
   \end{conj}
   Recall that, see \cite[Definition~5.1]{Hare}, a 0--1 polynomial $c(x)$ is called $\epsilon$-unfair if there exists a factorization $c(x) = a(x) b(x)$ with $a(x) = \sum a_i x^i$, $b(x) = \sum b_i x^i$ and 
    \begin{itemize}
        \item $b_0 = a_0 = 1$,
        \item There exists an $i$ such that $b_i \not\in \{0,1\}$,
        \item $-\epsilon \leq b_i \leq 1+\epsilon$,
        \item $-\epsilon \leq a_i \leq 1+\epsilon$.
    \end{itemize}
The goal of this paper is to prove Conjecture \ref{C3}. The main result of this paper is the following theorem.
\begin{thm}\label{T1}
Let $n$ be a positive integer. Consider the 0--1 polynomial 
\[C_n(x)=1+x^3+x^{6n+2}+x^{6n+3}+x^{6n+4}+x^{6n+5}+x^{6n+6}+x^{6n+7}+x^{12n+6}+x^{12n+9}.\]
Let $m=6n+3$. Then there exists $t_n\in (0,\dfrac{3.1}{m^2})$ such that \[A_n(x):=x^3+t_nx^2+t_nx+1\]
divides $C_n(x)$ in $\R[x]$. The quotient $B_n:=C_n/A_n$ is a real polynomial of degree $12n+6$ with $B_n(0)=1$, and the coefficient of $x$ in $B_n$ is $-t_n$. Let \[\alpha_n:=\min\{\alpha\geq 0\colon \text{ every coefficient of $A_n$ and $B_n$ lies in $[-\alpha,1+\alpha]$}\}.\]
Then \begin{equation}\label{E1}t_n\leq \alpha_n\leq \left(1+\dfrac{24}{m}\right)t_n,\quad |t_n-\dfrac{3}{m^2}|\leq \dfrac{7}{m^4}.\end{equation}
In particular, \begin{equation}\label{alpha_n}\alpha_n=\dfrac{3}{m^2}+O\left(\dfrac{1}{m^3}\right)=\dfrac{3}{(6n+3)^2}\left(1+O\left(\dfrac{1}{n}\right)\right).\end{equation}
\end{thm}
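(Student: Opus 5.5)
The starting point is a factorization of $C_n$. Since $1+x+\dots+x^5=(1+x^3)(1+x+x^2)$, we have
\[C_n(x)=(1+x^3)\,D(x),\qquad D(x)=1+x^{m-1}+x^m+x^{m+1}+x^{2m}.\]
Also $A_n(x)=(x+1)Q_t(x)$ with $Q_t(x)=x^2+(t-1)x+1$, and $1+x^3=(x+1)(x^2-x+1)$. So $A_n\mid C_n$ holds as soon as $Q_t\mid D$.

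\textbf{Finding $t_n$.} For $t\in(0,1)$ the roots of $Q_t$ are $e^{\pm i\theta}$, where $2\cos\theta=1-t$ and $\theta$ is slightly larger than $\pi/3$. Because $D$ is palindromic,
\[D(e^{i\theta})=e^{im\theta}f(\theta),\qquad f(\theta)=2\cos(m\theta)+1+2\cos\theta,\]
so $Q_t\mid D$ if and only if $f(\theta)=0$. Since $m=6n+3$ is an odd multiple of $3$, $\theta=\pi/3$ is always a root; it corresponds to $t=0$. Write $\theta=\pi/3+\delta$. Then $\cos(m\theta)=-\cos(m\delta)$, and Taylor expansion gives
\[f=m^2\delta^2-\sqrt3\,\delta+O(m^4\delta^4+\delta^2).\]
I will pick explicit $\delta_\pm=\sqrt3/m^2\pm c/m^4$, check rigorous signs of $f(\pi/3+\delta_\pm)$ using the alternating Taylor bounds for $\cos$, and apply the intermediate value theorem. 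This gives a root $\delta_n$ with $|\delta_n-\sqrt3/m^2|=O(1/m^4)$. Then $t_n=1-2\cos\theta_n=\sqrt3\sin\delta_n+1-\cos\delta_n$, which yields $|t_n-3/m^2|\le 7/m^4$ and $t_n<3.1/m^2$ once the constants are tracked. A direct check handles small $n$ if needed.

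\textbf{Computing $B_n$.} Set $E=D/Q_t$; it is palindromic of degree $2m-2$. Then
\[B_n=\frac{(x^2-x+1)D}{Q_t}=\frac{(Q_t-tx)D}{Q_t}=D-t\,xE.\]
So $B_n$ has degree $2m=12n+6$, $B_n(0)=1$, and its $x$-coefficient is $-tE_0=-t_n$. This gives $\alpha_n\ge t_n$. Since $D$ is $0$--$1$, every coefficient of $B_n$ lies in $[-t\max_k E_k,\;1+t\max_k(-E_k)]$. Hence it suffices to show $|E_k|\le 1+24/m$ for all $k$; the coefficients of $A_n$ are $0\le t_n\le1$ and cause no trouble.

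\textbf{Bounding $E$.} Using $1/Q_t=\sum_k U_k x^k$ with $U_k=\sin((k+1)\theta)/\sin\theta$, we get $E_k=U_k$ for $k\le m-2$. By palindromicity $E_k=E_{2m-2-k}$, so this covers all $k$ except the single middle index $k=m-1$, where $E_{m-1}=U_{m-1}-U_0=U_{m-1}-1$ (equivalently, computed from the reversed division). At $\theta=\pi/3$ the sequence $U_k$ cycles through $1,1,0,-1,-1,0$, so it is bounded by $1$. Moreover $|U_k(\theta)-U_k(\pi/3)|\le (k+1)\delta/\sin\theta+O(\delta)$, and $k\delta\lesssim m\cdot\sqrt3/m^2$, which gives $|E_k|\le 1+C/m$ with an explicit $C\le24$.

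The main obstacle I expect is the middle coefficient together with the bookkeeping near $k\approx m$, where the perturbation is largest. I would handle it using the exact relation $f(\theta_n)=0$, i.e.\ $\cos(m\theta_n)=-\tfrac12-\cos\theta_n$, to evaluate $U_{m-2},U_{m-1}$ in closed form. The final asymptotic $\alpha_n=3/m^2+O(1/m^3)$ then follows immediately from \eqref{E1}.
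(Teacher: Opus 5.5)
Your proposal is correct. Its skeleton matches the paper's: the factorization $C_n=(1+x^3)D$, the reduction of $Q_t\mid D$ to $2\cos(m\theta)+2\cos\theta+1=0$, an intermediate-value bracket around $\delta=\sqrt3/m^2$, and $\alpha_n\ge t_n$ from the $x$-coefficient. Where you differ is in bounding the coefficients of $B_n$.

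The paper expands $(1-x+x^2)/q(x)$ to get $b_k=D_k-\frac{t}{\sin\theta^*}S_k(\theta^*)$, where $S_k$ is a sum of up to four sines. It then bounds $S_k$ for every $0\le k\le 2m$ using a mod-$6$ table at $\pi/3$ (Lemma~\ref{L5}) and a termwise perturbation estimate (Lemma~\ref{L6}). The constant $24$ comes from those four terms, whose arguments run up to $2m\delta^*$.

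Your identity $B_n=D-txE$ is the same formula, since your $E_{k-1}$ equals $S_k(\theta)/\sin\theta$. The new ingredient is the palindromy of $E=D/Q_t$, which holds because the division is exact. It reduces everything to single values $U_k$ with $k\le m-2$, plus the middle coefficient. This removes the case analysis and gives the sharper bound $|E_k|\le 1+\frac{2(m-1)\delta_n}{\sqrt3}<1+\frac{2.1}{m}$. The paper's estimate instead handles every $k$ directly and never uses symmetry.

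Two small corrections. First, the middle coefficient is $E_{m-1}=U_{m-1}+U_0=1+U_{m-1}$, not $U_{m-1}-1$. Since $U_{m-1}=\sin(m\theta_n)/\sin\theta_n=-\sin(m\delta_n)/\sin\theta_n\in(-1,0)$, you get $0<E_{m-1}<1$ at once. So the middle coefficient is not an obstacle, and you do not need a closed-form evaluation via $f(\theta_n)=0$ there.

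Second, the root lies at $\delta_n\approx\sqrt3/m^2+\frac{3\sqrt3}{4}m^{-4}$. In $\delta_\pm=\sqrt3/m^2\pm c/m^4$ you therefore need $c$ somewhat above $3\sqrt3/4\approx 1.3$ to get the right sign at $\delta_+$. You also need $c$ at most about $3.1$ if the bracket alone is to give $|t_n-3/m^2|\le 7/m^4$. The paper uses the one-sided bracket $[\sqrt3/m^2,\sqrt3/m^2+3/m^4]$.
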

The consequence of Theorem \ref{T1} is the following. 
\begin{thm}\label{T2} For every $\epsilon>0$, there exist a 0--1 polynomial $c(x)$ and monic real polynomials $a(x),b(x)$ with $c(x)=a(x)b(x)$, $a(0)=b(0)=1$, every coefficient of $a(x)$ and $b(x)$ in $[-\epsilon,1+\epsilon]$, and some coefficient of $b(x)$ not in $\{0,1\}$. We can take $c(x)=C_n(x)$ for any $n$ such that $12(6n+3)^{-2}\leq \epsilon$.  In particular, Conjecture \ref{C3} is true.  Moreover, with $\alpha_n$ as in Theorem \ref{T1}, we have
\begin{equation}\label{E2}\lim\limits_{n\rightarrow \infty}\dfrac{\log \alpha_n}{\log(12n+9)}=-2.\end{equation}
\end{thm}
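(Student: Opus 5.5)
Theorem \ref{T2} should follow from Theorem \ref{T1} with only bookkeeping, so the plan is to verify each requirement in the definition of $\epsilon$-unfair directly.

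\textbf{Setup.} Fix $\epsilon>0$ and choose $n$ with $12(6n+3)^{-2}\le\epsilon$. This is possible because $(6n+3)^{-2}\to0$. Put $m=6n+3$, $c=C_n$, $a=A_n$ and $b=B_n$, all as in Theorem \ref{T1}.

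\textbf{Verification.}
\begin{itemize}
\item $a$ is monic with $a(0)=1$.
\item $b$ has constant term $B_n(0)=1$. It is monic because $C_n$ and $A_n$ are both monic.
\item The coefficient of $x$ in $b$ is $-t_n$. Since $t_n>0$, this coefficient is negative and hence not in $\{0,1\}$.
\item By definition of $\alpha_n$, every coefficient of $a$ and $b$ lies in $[-\alpha_n,1+\alpha_n]$. So it suffices to show $\alpha_n\le\epsilon$.
\end{itemize}

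\textbf{Bounding $\alpha_n$.} By \eqref{E1} we have $\alpha_n\le(1+24/m)t_n$ and $t_n<3.1/m^2$. Since $m\ge 9$, this gives
\[
\alpha_n\le \Bigl(1+\tfrac{24}{9}\Bigr)\frac{3.1}{m^2}=\frac{11}{3}\cdot\frac{3.1}{m^2}<\frac{11.4}{m^2}<\frac{12}{m^2}\le\epsilon .
\]
This establishes the first part and hence Conjecture \ref{C3}.

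\textbf{The limit \eqref{E2}.} Use $t_n\le\alpha_n\le(1+24/m)t_n$ together with $|t_n-3/m^2|\le 7/m^4$. These give
\[
\alpha_n=\frac{3}{m^2}\,(1+o(1)),
\]
so
\[
\log\alpha_n=\log 3-2\log m+o(1).
\]
On the other side, $\log(12n+9)=\log(2m+3)=\log m+O(1)$. Dividing, the ratio tends to $-2$.

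\textbf{Expected difficulty.} There is essentially no obstacle here. The only care needed is the explicit constant check $\frac{11}{3}\cdot 3.1<12$, which uses $m\ge9$. All the substantive work, namely existence of $t_n$ and the coefficient bounds, is contained in Theorem \ref{T1}.
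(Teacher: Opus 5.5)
Your proposal is correct and follows essentially the same route as the paper. You take $c=C_n$, $a=A_n$, $b=B_n$, use Theorem~\ref{T1} to get $\alpha_n\le(1+24/9)\cdot 3.1/m^2<12/m^2\le\epsilon$, let the $x$-coefficient $-t_n<0$ of $B_n$ supply the coefficient outside $\{0,1\}$, and deduce the limit from $\alpha_n=3m^{-2}(1+o(1))$ together with $\log(12n+9)=\log m+O(1)$. You also check explicitly that $B_n$ is monic, as a quotient of monic polynomials, which the paper leaves implicit.
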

In \cite{Hare}, Hare observed that \[\log \alpha_n\approx -2.925903281-1.871057363 \log(12n + 9)\] from his data for $1\leq n\leq 40$. Theorem \ref{T2} shows that the true value of the limit is $-2$. 
\section{Proof of Theorem \ref{T1}}
Note that $C_n(x)$ has a factorization
\[
C_n(x)=(1+x^3)Q_n(x),\quad Q_n(x):=1+x^{6n+2}+x^{6n+3}+x^{6n+4}+x^{12n+6}.
\]
Hence, \[E:=\{0,m-1,m,m+1,2m\}\]
 is the exponent set of $Q_n(x)$. 
For $0<t<1$, then \[A_t(x):=x^3+tx^2+tx+1=(1+x)q_t(x),\quad q_t(x):=x^2+(t-1)x+1.\]
In order for $A_t(x)|C_n(x)$, every root of $q_t(x)$ be must be a root of $q_t(x)$. 
The roots of $q_t(x)$ are $e^{\pm i\theta}$, where $\theta\in (\pi/3,\,\pi/2)$ with $2\cos \theta=1-t$. For $x=e^{i\theta}$ then \begin{equation}\label{E5}
e^{-im\theta}Q_n(e^{i\theta})=e^{-im\theta}+e^{-i\theta}+1+e^{i\theta}+e^{im\theta}=G(\theta),
\end{equation}
where \[
G(\theta)=2\cos(m\theta)+2\cos \theta+1.
\]
Since $m\pi/3=(2n+1)\pi$, we have for all real $\delta$ then \begin{equation}\label{E6}
G(\dfrac{\pi}{3}+\delta)=1-2\cos(m\delta)+\cos\delta-\sqrt{3}\sin\delta.
\end{equation}
Through this paper, we often use the following inequalities concerning the sine and cosine functions: for all $y\in [0,1]$ then 
\begin{equation}\label{ineq}
\begin{cases}
1-\dfrac{y^2}{2}\leq\cos y\leq 1-\dfrac{y^2}{2}+\dfrac{y^4}{24},\\
y-\dfrac{y^3}{6}\leq \sin y\leq y,\\
\cos y\geq 1-\dfrac{y^2}{2}+\dfrac{y^4}{24}-\dfrac{y^6}{720}.
\end{cases}
\end{equation}

\begin{lem}\label{L1}
For every $n\geq 1$, there exists $\delta^{*}\in \left(\dfrac{\sqrt{3}}{m^2},\dfrac{\sqrt{3}}{m^2}+\dfrac{3}{m^4}\right)$
such that $G(\pi/3+\delta^*)=0$. Let $\theta^*=\pi/3+\delta^*$ and $t:=1-2\cos\theta^*$. Then \begin{equation}\label{E8}
\dfrac{3}{m^2}-\dfrac{2}{m^6}\leq t\leq \dfrac{3}{m^2}+\dfrac{7}{m^4}<\dfrac{3.1}{m^2}<1.
\end{equation}
Moreover, $\delta^*<1.78m^{-2}$ and $\theta^*\in (\pi/3,\pi/2)$.
\end{lem}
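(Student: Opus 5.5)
Define $g(\delta):=G(\pi/3+\delta)=1-2\cos(m\delta)+\cos\delta-\sqrt3\sin\delta$ by \eqref{E6}. The plan is to use the intermediate value theorem on $g$ over the interval $[\delta_0,\delta_1]$ with $\delta_0=\sqrt3/m^2$ and $\delta_1=\sqrt3/m^2+3/m^4$. The key scaling is $m\delta\approx \sqrt3/m\le \sqrt3/9<1$, so all arguments lie in $[0,1]$ and the inequalities \eqref{ineq} apply both to $y=\delta$ and to $y=m\delta$. Heuristically, $1-2\cos(m\delta)\approx -1+m^2\delta^2$ and $\cos\delta-\sqrt3\sin\delta\approx 1-\sqrt3\delta$. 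Hence $g(\delta)\approx m^2\delta^2-\sqrt3\delta$, which vanishes at $\delta=\sqrt3/m^2$. The correction terms are of order $m^4\delta^4\sim 1/m^4$ and $\delta^2\sim 1/m^4$, and these are what the $3/m^4$ window has to absorb.

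\textbf{Step 1: sign of $g$ at $\delta_0$.} I will show $g(\delta_0)<0$. Bound $-2\cos(m\delta_0)\le -2+m^2\delta_0^2$ and $\cos\delta_0\le 1-\delta_0^2/2+\delta_0^4/24$. Also $-\sqrt3\sin\delta_0\le -\sqrt3\delta_0+\sqrt3\delta_0^3/6$. Since $m^2\delta_0^2=\sqrt3\delta_0$, this gives
\[g(\delta_0)\le -\tfrac{\delta_0^2}{2}+\tfrac{\delta_0^4}{24}+\tfrac{\sqrt3\delta_0^3}{6}<0\]
because $\delta_0$ is small.

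\textbf{Step 2: sign of $g$ at $\delta_1$.} I will show $g(\delta_1)>0$. Here I use the lower bounds $-2\cos(m\delta_1)\ge -2+m^2\delta_1^2-m^4\delta_1^4/12$ (from the upper bound on $\cos$), $\cos\delta_1\ge 1-\delta_1^2/2$, and $\sin\delta_1\le\delta_1$. This gives
\[g(\delta_1)\ge \delta_1(m^2\delta_1-\sqrt3)-\tfrac{m^4\delta_1^4}{12}-\tfrac{\delta_1^2}{2}.\]
Write $m^2\delta_1-\sqrt3=3/m^2$. The main term is then $\approx 3\sqrt3/m^4$. It must beat $m^4\delta_1^4/12\approx 9/(12m^4)$ and $\delta_1^2/2\approx 3/(2m^4)$. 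Since $3\sqrt3\approx5.196>0.75+1.5$, there is enough room. The higher-order pieces coming from $\delta_1\ne\delta_0$ have relative size $O(1/m^2)$ with $m\ge 9$, and I will bound them explicitly. This numerical bookkeeping is the main obstacle, and it is where the constants $3$ and, later, $1.78$ get fixed. The intermediate value theorem then gives $\delta^*\in(\delta_0,\delta_1)$ with $g(\delta^*)=0$.

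\textbf{Step 3: bounds on $t$ and $\theta^*$.} Since $2\cos(\pi/3+\delta)=\cos\delta-\sqrt3\sin\delta$, we have
\[t=1-\cos\delta^*+\sqrt3\sin\delta^*.\]
For the upper bound, use $t\le \delta^{*2}/2+\sqrt3\delta^*$ with $\delta^*<\delta_1$. This gives $\sqrt3\delta_1=3/m^2+3\sqrt3/m^4$, plus $\delta_1^2/2\approx 1.5/m^4$. The total extra is about $6.7/m^4\le 7/m^4$. For the lower bound, use $t\ge \delta^{*2}/2-\delta^{*4}/24+\sqrt3(\delta^*-\delta^{*3}/6)$ with $\delta^*>\delta_0$. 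Then $\sqrt3\delta_0=3/m^2$. The positive term $\delta_0^2/2=1.5/m^4$ dominates the negative terms $\sqrt3\delta_0^3/6+\delta_0^4/24=O(m^{-6})$, so in fact $t\ge 3/m^2$, which is stronger than $3/m^2-2/m^6$.

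The remaining claims are numerical. We have $7/m^4\le 0.1/m^2$ because $m^2\ge81>70$, and $3.1/m^2<1$. Also $\delta^*<\delta_1=(\sqrt3+3/m^2)/m^2\le(1.7321+0.037)/m^2<1.78/m^2$. Finally $0<\delta^*<\pi/6$, which gives $\theta^*\in(\pi/3,\pi/2)$.
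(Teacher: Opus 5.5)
Your proposal is correct and follows essentially the same route as the paper: you apply the intermediate value theorem to $G(\pi/3+\delta)$ on $[\sqrt{3}/m^2,\sqrt{3}/m^2+3/m^4]$, using the bounds \eqref{ineq} at both $\delta$ and $m\delta$ and the identity $m^2\delta_1-\sqrt{3}=3/m^2$ at your right endpoint $\delta_1$, and then use $t=1-\cos\delta^*+\sqrt{3}\sin\delta^*$ for the bounds on $t$. The differences are minor. Your cruder bound $\cos y\geq 1-y^2/2$ at the left endpoint already suffices, and by keeping $1-\cos\delta^*$ you get the slightly stronger $t>3/m^2$ in place of the paper's $t\geq 3/m^2-2/m^6$; for that step you should state that $\delta\mapsto 1-\cos\delta+\sqrt{3}\sin\delta$ is increasing on $[0,\pi/2]$, which is what justifies evaluating at $\delta_0$.
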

\begin{proof}
Let $\delta_1=\sqrt{3}/m^2$ and $\delta_2=\sqrt{3}/m^2+3/m^4$. Let $y_1=m\delta_1$ and $y_2=m\delta_2$. For $m\geq 9$, \[\delta_2=\dfrac{\sqrt{3}}{m^2}+\dfrac{3}{m^4}\leq \dfrac{\sqrt{3}}{m^2}+\dfrac{1}{27m^2}<\dfrac{1.78}{m^2},\]
\[y_2=m\delta_2<\dfrac{1.78}{m}.\]
From  \eqref{E6} and \eqref{ineq}, we have 
\[\begin{split}
G\left(\dfrac{\pi}{3}+\delta_1\right)&\leq \left[-1+y_1^2-\dfrac{y_1^4}{12}+\dfrac{y_1^6}{360}\right] +\left[1-\dfrac{\delta_1^2}{2}+\dfrac{\delta_1^4}{24}\right]+\left[-\sqrt{3}\delta_1+\dfrac{\sqrt{3}\delta_1^3}{6}\right]\\
&=\left(
-1+\frac{3}{m^2}-\frac{3}{4m^4}+\frac{3}{40m^6}
\right)+\left(
1-\frac{3}{2m^4}+\frac{3}{8m^8}
\right)+\left(
-\frac{3}{m^2}+\frac{3}{2m^6}
\right)\\
&=\dfrac{-90m^4+63m^2+15}{40m^8}\\
&<0,
\end{split}\]
and
\[\begin{split}
G\left(\dfrac{\pi}{3}+\delta_2\right)&\geq \left[-1+y_2^2-\dfrac{y_2^4}{12}\right]+\left[1-\dfrac{\delta_2^2}{2}\right]-\sqrt{3}\delta_2\\
&=y_2^2-\sqrt{3}\delta_2-\dfrac{y_2^4}{12}-\dfrac{\delta_2^2}{2},
\end{split}\]
Note that \[y_2^2-\sqrt{3}\delta_2=\dfrac{3\sqrt{3}}{m^4}+\dfrac{9}{m^6}>\dfrac{3\sqrt{3}}{m^4}.\]
Hence,
\[G\left(\dfrac{\pi}{3}+\delta_2\right)>\dfrac{3\sqrt{3}-\dfrac{1.78^4}{12}-\dfrac{1.78^2}{2}}{m^4}>0.\]
By continuity, there exists $\delta^*\in (\delta_1,\delta_2)$ such that $G(\pi/3+\delta^*)=0$.
Finally, \[t=1-2\cos(\dfrac{\pi}{3}+\delta^*)=1-\cos\delta^*+\sqrt{3}\sin\delta^*>0.\]
By \eqref{ineq}, we also have
\[\begin{split}t&\leq \dfrac{(\delta^*)^2}{2}+\sqrt{3}\delta^*\\
&<\dfrac{\delta_2^2}{2}+\sqrt{3}\delta_2\\
&<\dfrac{1}{2}\left(\dfrac{1.78}{m^2}\right)^2+\dfrac{3}{m^2}+\dfrac{3\sqrt{3}}{m^4}\\
&<\dfrac{3}{m^2}+\dfrac{7}{m^4},\end{split}\]
and \[\begin{split}
t&\geq \sqrt{3}\sin\delta^*\\
&\geq \sqrt{3}\left(\delta^*-\dfrac{(\delta^*)^3}{6}\right)\\
&>\sqrt{3}\left(\delta_1-\dfrac{\delta_2^3}{6}\right)\\
&>\dfrac{3}{m^2}-\dfrac{\sqrt{3}\times 1.78^3}{6m^6}\\
&>\dfrac{3}{m^2}-\dfrac{2}{m^6}.
\end{split}\]
Therefore, \eqref{E8} is proved. For the last assertions, $\delta^*<\delta_2<1.78m^{-2}$ and $t\in(0,1)$ gives $\theta^*\in (\pi/3,\pi/2)$.
\end{proof}
Now fix $\theta^*=\pi/3+\delta^*$ and $t=t_n$ as in Lemma \ref{L1}. Let $q(x):=q_t(x)=x^2-2\cos(\theta^*)x+1$ and $A_n(x):=A_{t_n}(x)$.
\begin{lem}\label{L2} In $\R[x]$, we have $A_n(x)|C_n(x)$. Let \[B_n(x):=\dfrac{C_n(x)}{A_n(x)}=\dfrac{(1-x+x^2)Q_n(x)}{q(x)}.\]
Then $B_n(x)\in \R[x]$ with $\deg B_n(x)=12n+6=2m$ and $B_n(0)=1$.
\end{lem}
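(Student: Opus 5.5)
The proof reduces divisibility to showing that $q(x)$ divides $Q_n(x)$, using the root $\theta^*$ supplied by Lemma~\ref{L1}.

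\emph{Factorizations.} Since $1+x^3=(1+x)(1-x+x^2)$ and $A_n(x)=(1+x)q(x)$, we can write
\[
\frac{C_n(x)}{A_n(x)}=\frac{(1+x)(1-x+x^2)Q_n(x)}{(1+x)q(x)}=\frac{(1-x+x^2)Q_n(x)}{q(x)},
\]
as rational functions. So it suffices to show that $q(x)\mid Q_n(x)$ in $\R[x]$.

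\emph{Roots of $q$.} By construction $q(x)=x^2-2\cos(\theta^*)x+1$. Lemma~\ref{L1} gives $\theta^*\in(\pi/3,\pi/2)$, so $\sin\theta^*\neq 0$. Hence $q$ has the two distinct non-real roots $e^{\pm i\theta^*}$, and $q(x)=(x-e^{i\theta^*})(x-e^{-i\theta^*})$.

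\emph{Vanishing of $Q_n$ at these roots.} By \eqref{E5}, $e^{-im\theta^*}Q_n(e^{i\theta^*})=G(\theta^*)=0$, where $G(\theta^*)=0$ is Lemma~\ref{L1}. Thus $Q_n(e^{i\theta^*})=0$. Since $Q_n$ has real coefficients, also $Q_n(e^{-i\theta^*})=\overline{Q_n(e^{i\theta^*})}=0$. The two roots are distinct, so $q\mid Q_n$ in $\mathbb C[x]$.

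\emph{Divisibility over $\R$.} Division with remainder of $Q_n$ by the monic real polynomial $q$ takes place in $\R[x]$. By uniqueness of the remainder, the remainder is the same as in $\mathbb C[x]$, namely zero. Hence $Q_n=q\cdot P$ with $P\in\R[x]$, and therefore $B_n=(1-x+x^2)P\in\R[x]$. Moreover $C_n=(1+x)(1-x+x^2)qP=A_nB_n$, so $A_n\mid C_n$ in $\R[x]$.

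\emph{Degree and constant term.} We have $\deg Q_n=12n+6=2m$, so $\deg B_n=2+2m-2=2m=12n+6$. For the constant term, $B_n(0)=C_n(0)/A_n(0)=1/1=1$.

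The only point requiring care is that $e^{\pm i\theta^*}$ are genuinely distinct non-real roots of $q$. This is exactly what $\theta^*\in(\pi/3,\pi/2)$ from Lemma~\ref{L1} provides; the rest is routine.
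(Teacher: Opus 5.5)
Your proof is correct and follows the paper's argument. You reduce to $q\mid Q_n$, use \eqref{E5} together with $G(\theta^*)=0$ to get $Q_n(e^{i\theta^*})=0$, and conjugate. You also make explicit what the paper leaves implicit: the roots $e^{\pm i\theta^*}$ are distinct because $\sin\theta^*\neq 0$, divisibility passes from $\mathbb{C}[x]$ to $\R[x]$, and the degree and constant term follow directly.
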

\begin{proof}
It suffices to show that all roots of $q(x)$ are roots of $Q_n(x)$. Note that roots of $q(x)$ are $e^{\pm i\theta^*}$. By \eqref{E5} and $G(\theta^*)=0$, we have $Q_n(e^{i\theta^*})=0$. Since $Q_n(x)\in \R[x]$, by conjugation, we also have $Q_n(e^{-i\theta^*})=0$. Hence, $q(x)|Q_n(x)$ in $\R[x]$. It also follows that  $\deg B_n(x)=12n+6=2m$ and $B_n(0)=1$.
\end{proof}
Next, we write out the coefficients of $B_n(x)$ explicitly. Recall that for a set $S$, the function $[x\in S]$ is defined as 
\[[x\in S]=\begin{cases}
    0\text{ if } x\not\in S,\\
    1 \text{ if } x\in S.
\end{cases}\]
\begin{lem}\label{L3}
Write $B_n(x)=\sum\limits_{k\geq 0}b_kx^k$. Then $b_k=0$ for $k>2m$, and for $0\leq k\leq 2m$,
\begin{equation}\label{E9}
b_k=[k\in E]-\dfrac{t}{\sin\theta^*}S_k(\theta^*),
\end{equation}
where $S_k(\theta):=\sum\limits_{e\in E,e<k}\sin\left((k-e)\theta\right)$.
\end{lem}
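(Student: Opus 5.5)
The plan is to use the algebraic identity $1-x+x^2=q(x)-tx$, which converts the division by $q$ into a single correction term. Since $q(x)=x^2+(t-1)x+1$, we have
\[
1-x+x^2=q(x)-tx .
\]
By Lemma \ref{L2},
\[
B_n(x)=\frac{(1-x+x^2)Q_n(x)}{q(x)}=Q_n(x)-t\,\frac{xQ_n(x)}{q(x)}.
\]
The first term contributes exactly $[k\in E]$ to the coefficient of $x^k$. So it remains to compute the coefficients of $xQ_n(x)/q(x)$. I will do this as an identity of formal power series in $\R[[x]]$; this is legitimate because $q(0)=1$ makes $q$ invertible there.

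For the inverse of $q$, I use the Chebyshev generating function. Since $q(x)=1-2\cos(\theta^*)x+x^2=(1-e^{i\theta^*}x)(1-e^{-i\theta^*}x)$, we get
\[
\frac{1}{q(x)}=\sum_{j\ge 0}\frac{\sin((j+1)\theta^*)}{\sin\theta^*}\,x^j .
\]
This is valid because $\sin\theta^*\neq 0$, as $\theta^*\in(\pi/3,\pi/2)$ by Lemma \ref{L1}. It can be checked either by partial fractions or by verifying the recurrence $u_{j+1}-2\cos\theta^*\,u_j+u_{j-1}=0$ with $u_0=1$, $u_1=2\cos\theta^*$. The latter is the identity $\sin((j+2)\theta)+\sin(j\theta)=2\cos\theta\sin((j+1)\theta)$.

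Next I multiply by $xQ_n(x)=\sum_{e\in E}x^{e+1}$. The coefficient of $x^k$ in $xQ_n/q$ is then
\[
\sum_{e\in E,\ e+1\le k}\frac{\sin((k-e-1+1)\theta^*)}{\sin\theta^*}=\frac{S_k(\theta^*)}{\sin\theta^*}.
\]
Substituting into the decomposition gives \eqref{E9} for every $k\ge 0$, and in particular for $0\le k\le 2m$.

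Finally, $b_k=0$ for $k>2m$ needs no computation: Lemma \ref{L2} shows that $B_n$ is a polynomial of degree $2m$, and the power series expansion of a polynomial is that polynomial itself. I do not expect a real obstacle here. The only points needing care are the index shift in the convolution, which produces the strict inequality $e<k$, and justifying that the formal power series computation agrees with the polynomial quotient. The latter is immediate from uniqueness of inverses in $\R[[x]]$.
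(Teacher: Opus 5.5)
Your proof is correct, but it reaches \eqref{E9} by a somewhat different route than the paper.

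The paper expands $(1-x+x^2)/q(x)=\sum_j W_jx^j$ directly. It reads off $W_0,W_1,W_2$ from the coefficient comparison and proves $W_j=-t\sin(j\theta^*)/\sin\theta^*$ by induction via the three-term recurrence. You instead use $1-x+x^2=q(x)-tx$ to write $B_n=Q_n-t\,xQ_n/q$. That reduces everything to the standard expansion $1/q=\sum_j \sin((j+1)\theta^*)x^j/\sin\theta^*$, which is essentially the Chebyshev alternative the paper only mentions in its closing remark.

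Your splitting makes both parts of \eqref{E9} appear structurally:
\begin{itemize}
\item the term $[k\in E]$ is exactly the contribution of $Q_n$;
\item the factor $-t$ is visible from the start;
\item the strict inequality $e<k$ comes from $x/q$ having zero constant term.
\end{itemize}

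It also sidesteps a small slip in the paper. There, \eqref{Che} is asserted for $j=0$ even though $W_0=1\neq 0=-t\sin(0)/\sin\theta^*$. The slip is harmless: the induction step only ever uses indices $j\ge 1$, and the final convolution treats the $e=k$ term separately with $W_0=1$.

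The paper's version is a purely mechanical computation that doesn't require spotting the identity $1-x+x^2=q(x)-tx$. Yours is more transparent, and the two arguments are of comparable length.
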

\begin{proof} Since $q(0)=1$, expand $\dfrac{1-x+x^2}{q(x)}$ as a Maclaurin series.
 \[\dfrac{1-x+x^2}{q(x)}=\sum\limits_{j\geq 0}W_jx^j\]
 Then \[\left(1-2\cos(\theta^*)x+x^2\right)\left(\sum\limits_{j\geq 0}W_jx^j\right)=1-x+x^2.\]
Equating the coefficients of $x^j$ ($j\geq 0$) both sides gives \[W_0=1,\,W_1-2\cos(\theta^*)W_0=-1,\,W_2-2\cos(\theta^*)W_1+W_0=1,\]
and \[W_j-2\cos(\theta^*)W_{j-1}+W_{j-2}=0\quad\forall j\geq 3.\]
We prove by induction that \begin{equation}\label{Che}W_j=-\dfrac{t\sin(j\theta^*)}{\sin\theta^*}\quad\forall j\geq 0.\end{equation}
Since $W_0=1$, \[W_1=2\cos(\theta^*)W_0-1=2\cos\left(\theta^*\right)-1=-t=-\dfrac{t\sin\theta^*}{\sin\theta^*},\]
\[W_2=2\cos(\theta^*)W_1-W_0+1=-2t\cos(\theta^*)=-\dfrac{t\sin(2\theta^*)}{\sin\theta^*}\]
The formula \eqref{Che} is true for $j=0,1,2$. Assume that \eqref{Che} is true for $0,1,2,\ldots,j$, with $j\geq 2$. Then \[\begin{split}W_{j+1}&=2\cos(\theta^*)W_{j}-W_{j-1}\\
&=-\dfrac{t}{\sin\theta^*}\left(2\cos\theta^*\sin j\theta-\sin(j-1)\theta\right)\\
&=-\dfrac{t\sin((j+1)\theta)}{\sin\theta^*}.
\end{split}\]
Now, \[\begin{split}B_n(x)&=Q_n(x)\left(\sum\limits_{j\geq 0}W_jx^j\right)\\
&=\left(\sum\limits_{e\in E}x^e\right)\left(\sum\limits_{j\geq 0}W_jx^j\right).
\end{split}\]
Combining with \eqref{Che}, we obtain that \[\begin{split}b_k&=\sum\limits_{e\in E,e\leq k}W_{k-e}\\
&=[k\in E]-\dfrac{t}{\sin\theta^*}S_k(\theta^*).\end{split}\]
\end{proof}
\begin{lem}\label{Pro1}
Let $c(x)=\sum\limits_{j\geq 0}c_jx^j$ be a 0--1 polynomial with $c_1=0$. Assume that $c(x)=a(x)b(x)$ with $a(x),b(x)\in \R[x]$ and $a(0)=b(0)=1$. Write $a(x)=\sum\limits_{j\geq 0}a_jx^j$ and $b(x)=\sum\limits_{j\geq 0}b_jx^j$. Then $b_1=-a_1$. In particular, if $a_1>0$ then $b_1<0$, and every $\epsilon$ with all coefficients of $a,b$ in $[-\epsilon,1+\epsilon]$ satisfies $\epsilon\geq a_1$.
\end{lem}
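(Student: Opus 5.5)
The plan is to compare the coefficients of $x^1$ on both sides of $c(x)=a(x)b(x)$. The constant terms give nothing new, since $c_0=a_0b_0=1$ automatically. In the product $a(x)b(x)$, the coefficient of $x$ is $a_0b_1+a_1b_0$, and with $a_0=b_0=1$ this equals $a_1+b_1$. Setting it equal to $c_1=0$ gives $b_1=-a_1$.

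The consequences then follow in order. If $a_1>0$, then $b_1=-a_1<0$. If every coefficient of $a$ and $b$ lies in $[-\epsilon,1+\epsilon]$, then in particular $b_1\ge -\epsilon$, so $-a_1\ge-\epsilon$, which is $\epsilon\ge a_1$.

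I would also note how this is used. In the setting of Theorem \ref{T1}, apply the lemma with $c=C_n$ (whose coefficient $c_1$ is $0$), $a=A_n$ and $b=B_n$. Since $a_1=t_n>0$, this yields both the claim that the coefficient of $x$ in $B_n$ is $-t_n$ and the lower bound $\alpha_n\ge t_n$ in \eqref{E1}. It agrees with Lemma \ref{L3}: $b_1=[1\in E]-\frac{t}{\sin\theta^*}\sin\theta^*=-t$, because $1\notin E$.

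There is no real obstacle here; the only point to watch is using the normalizations $a(0)=b(0)=1$ correctly.
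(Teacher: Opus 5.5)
Your proposal is correct and is the paper's argument: comparing the coefficients of $x$ in $c=ab$ gives $0=c_1=a_0b_1+a_1b_0=a_1+b_1$, and the two consequences follow from $b_1=-a_1$ and $b_1\ge-\epsilon$. Your remark on the application with $a_1=t_n$, giving $b_1=-t_n$ and $\alpha_n\ge t_n$ in Theorem \ref{T1}, also matches how the paper uses the lemma.
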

\begin{proof} This is obvious since comparing the coefficients of $x$ in $c(x)=a(x)b(x)$ gives \[0=c_1=a_0b_1+a_1b_0=a_1+b_1.\]
\end{proof}
\begin{lem}\label{L5} With the notation $S_k(\theta):=\sum\limits_{e\in E,e<k}\sin\left((k-e)\theta\right)$ as in Lemma \ref{L3}. Then \[|S_k(\dfrac{\pi}{3})|\leq \dfrac{\sqrt{3}}{2}\quad\forall 0\leq k\leq 2m.\]
\end{lem}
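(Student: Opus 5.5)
The key fact is that the sine values at $\theta=\pi/3$ repeat with period $6$: $\sin(j\pi/3)$ cycles through $0,\tfrac{\sqrt3}{2},\tfrac{\sqrt3}{2},0,-\tfrac{\sqrt3}{2},-\tfrac{\sqrt3}{2}$. Since $m=6n+3\equiv 3 \pmod 6$, the exponents in $E$ reduce modulo $6$ as $0,2,3,4,0$. The plan is to split $0\le k\le 2m$ according to which elements of $E$ lie below $k$, and in each range to evaluate $S_k(\pi/3)$ in closed form as a function of $k \bmod 6$.

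\emph{Range 1: $k\le m-1$.} Only $e=0$ contributes, so $S_k(\pi/3)=\sin(k\pi/3)\in\{0,\pm\tfrac{\sqrt3}{2}\}$.

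\emph{Range 2: $m-1<k\le m+2$.} The only values are $k=m,m+1,m+2$, and the contributing exponents are those among $0,m-1,m,m+1$ that are $<k$. I will compute these three values directly, using $\sin((k-e)\pi/3)$ with $(k-e)\bmod 6$ known:
\begin{itemize}
\item for $k=m$: $\sin(m\pi/3)+\sin(\pi/3)=0+\tfrac{\sqrt3}{2}$;
\item for $k=m+1$: $\sin((m+1)\pi/3)+\sin(2\pi/3)+\sin(\pi/3)=-\tfrac{\sqrt3}{2}+\tfrac{\sqrt3}{2}+\tfrac{\sqrt3}{2}=\tfrac{\sqrt3}{2}$;
\item for $k=m+2$: $\sin((m+2)\pi/3)+\sin(3\pi/3)+\sin(2\pi/3)+\sin(\pi/3)=-\tfrac{\sqrt3}{2}+0+\tfrac{\sqrt3}{2}+\tfrac{\sqrt3}{2}=\tfrac{\sqrt3}{2}$.
\end{itemize}

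\emph{Range 3: $m+1<k\le 2m$.} All of $0,m-1,m,m+1$ are below $k$, and $2m$ is not. Using $\sin(a+\pi)=-\sin a$ (since $m\pi/3$ is an odd multiple of $\pi$), the sum becomes
\[
S_k(\pi/3)=\sin(k\pi/3)-\bigl[\sin((k+1)\pi/3)+\sin(k\pi/3)+\sin((k-1)\pi/3)\bigr].
\]
The bracket equals $\sin(k\pi/3)(1+2\cos(\pi/3))=2\sin(k\pi/3)$. Hence $S_k(\pi/3)=-\sin(k\pi/3)$, which again has absolute value at most $\tfrac{\sqrt3}{2}$. The same formula also covers $k=m+2$, which serves as a consistency check on Range 2.

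\emph{Boundary $k=2m$.} Here $e=2m$ is excluded because the sum runs over $e<k$, so $k=2m$ falls under Range 3.

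The only real obstacle is bookkeeping at the transition values of $k$ near $m-1$, $m$, $m+1$, where the set of contributing exponents changes. I will handle these by explicit evaluation, as above, rather than through the general formula.
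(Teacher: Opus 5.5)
Your proof is correct and follows essentially the paper's route: periodicity modulo $6$ with $m\equiv 3\pmod 6$, a case split according to which elements of $E$ lie below $k$, and direct evaluation at the transition values $k=m,m+1$. (For $k=0$ the sum is actually empty, but your formula $\sin(k\pi/3)$ still gives the correct value $0$.) The only difference is that for $m+2\le k\le 2m$ the paper tabulates the six residues of $k$ to obtain $0,-1,-1,0,1,1$, whereas your identity $S_k(\pi/3)=-\sin(k\pi/3)$, derived from $m\pi/3=(2n+1)\pi$ and the sum-to-product formula, gives the same values in one line.
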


\begin{proof}
Write $\sin(j\pi/3)=\dfrac{\sqrt{3}}{2}g(j\pmod{6})$, where \[g(0)=0,\,g(1)=1,\,g(2)=1,\,g(3)=0,g(4)=-1,g(5)=-1.\]
Let $j:=k\pmod{6}$. Since $m\equiv 3$ (mod 6), we have $m-1\equiv 2$ (mod $6$), $m+1\equiv 4$ (mod $6$), $2m\equiv 0$ (mod $6$). To show $|S_k(\dfrac{\pi}{3})|\leq \dfrac{\sqrt{3}}{2}$, it suffices to show that $|T_k|\leq 1$, where  \[T_k:=\sum_{e\in E,e<k}g(k-e).\]
If $k=0$, the sum is empty. If $1\leq k\leq m-1$, the only $e\in E$ such that $e<k$ is $e=0$, so \[|T_k|=|g(j)|\leq 1.\]
If $k=m$, then $e\in \{0,m-1\}$. Hence, \[T_k=g(m)+g(1)=g(3)+g(1)=1.\]
If $k=m+1$, then $e\in \{0,m-1,m\}$. Hence, \[T_k=g(m+1)+g(2)+g(1)=g(4)+1+1=1.\]
Finally, if $m+2\leq k\leq 2m$, then $e$ ranges over $\{0,m-1,m,m+1\}$ (the element $2m$ is excluded since $2m<k$ fails). Hence, \[T_k=g(j)+g(j-2)+g(j-3)+g(j-4)\]
with the arguments read modulo 6. Evaluating for $j=0,1,2,\ldots,5$ gives the value of $T_k$ respectively \[0,\,-1,\,-1,\,0,\,1,\,1.\]
Therefore, in all cases, \[|T_k|\leq 1.\]
\end{proof}
\begin{lem}\label{L6}
With $\theta^*$ as in Lemma \ref{L1} and $S_k$ as in Lemma \ref{L3}, then \[|S_k(\theta^*)|\leq \dfrac{\sqrt{3}}{2}+\dfrac{20}{m}\quad\forall 0\leq k\leq 2m.\]
\end{lem}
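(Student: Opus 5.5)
We compare $S_k(\theta^*)$ with $S_k(\pi/3)$ and use Lemma \ref{L5}. Write $\theta^*=\pi/3+\delta^*$. By the mean value theorem, applied to each summand separately,
\[
|\sin((k-e)\theta^*)-\sin((k-e)\pi/3)|\leq (k-e)\delta^*.
\]
Hence
\[
|S_k(\theta^*)|\leq |S_k(\pi/3)|+\delta^*\sum_{e\in E,\,e<k}(k-e)\leq \frac{\sqrt{3}}{2}+\delta^*\sum_{e\in E,\,e<k}(k-e).
\]
It therefore suffices to bound $\delta^*\sum_{e<k}(k-e)$ by $20/m$.

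For $0\leq k\leq 2m$ the elements $e\in E$ with $e<k$ lie among $\{0,m-1,m,m+1\}$, since $2m<k$ never holds. So the sum $\sum_{e<k}(k-e)$ has at most four terms. Its largest value occurs at $k=2m$:
\[
2m+(m+1)+m+(m-1)=5m.
\]
For smaller $k$ the sum is no larger. When $k\leq m+1$ it is at most $k+2\leq m+3$. When $m+2\leq k\leq 2m$ it equals $4k-3m\leq 5m$.

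Lemma \ref{L1} gives $\delta^*<1.78/m^2$. Therefore the error term is at most
\[
5m\cdot \frac{1.78}{m^2}=\frac{8.9}{m}<\frac{20}{m},
\]
which proves the claim.

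The argument is only a crude Lipschitz perturbation. The main point to check is that Lemma \ref{L1}'s bound $\delta^*<1.78m^{-2}$ holds for all $n\geq1$. That lemma used $m\geq 9$, and this is exactly $n\geq 1$, so the bound applies. The generous constant $20$ suggests the authors may have bounded the sum more loosely, for instance by $4\cdot 2m=8m$. That still gives $8m\cdot 1.78/m^2\approx 14.24/m<20/m$. Either route closes the argument, so I do not expect a real obstacle.
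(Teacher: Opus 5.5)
Your proof is correct and follows the paper's route: compare $S_k(\theta^*)$ termwise with $S_k(\pi/3)$, apply Lemma \ref{L5}, and use $\delta^*<1.78m^{-2}$ with at most four summands. The paper bounds each term by $v+v^2/2$, where $v=(k-e)\delta^*<4/m$, and gets $16/m+32/m^2$; your Lipschitz bound together with the exact sum $\sum_{e<k}(k-e)\le 5m$ gives the sharper $8.9/m$. There is one harmless slip: at $k=m+1$ that sum is $(m+1)+2+1=m+4=k+3$, not at most $k+2$, but this is still well below $5m$.
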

\begin{proof}
Fix $k$ with $0\leq k\leq 2m$. For $e\in E$ with $e<k$. Let $u=(k-e)\pi/3$ and $v=(k-e)\theta^*$. Since $0<k-e\leq 2m$ and $\delta^*<1.78m^{-2}$ by Lemma \ref{L1}, we have \[0<v<\dfrac{3.56}{m}<\dfrac{4}{m}.\]
Therefore, \[\begin{split}|\sin(u+v)-\sin u|&=|\sin u(\cos v-1)+\cos u \sin v|\\
&\leq 1-\cos v+\sin v\\
&\leq \dfrac{v^2}{2}+v\\
&<\dfrac{8}{m^2}+\dfrac{4}{m}.\end{split}\]
For $0\leq k\leq 2m$, the sum $S_k$ has at most four terms: the element $2m$ of $E$ never satisfies $2m<k$. Therefore,
\[\left|S_k(\theta^*)-S_k(\dfrac{\pi}{3})\right|\leq \dfrac{32}{m^2}+\dfrac{16}{m}\leq \dfrac{20}{m}.\]
Now applying Lemma \ref{L5} gives \[|S_k(\theta^*)|\leq \dfrac{\sqrt{3}}{2}+\dfrac{20}{m}.\]
\end{proof}
We are now ready to prove Theorem \ref{T1}. Let $t=t_n,\theta^*,$ and let $A_n(x)$, $B_n(x)$, and $C_n(x)$ as in Lemma  \ref{L2}. The bounds of $t_n$ in \eqref{E8} are proved in Lemma \ref{L1}, $A_n|C_n$ with $\deg B_n=12n+6$, and $B_n(0)=1$ are proved in Lemma \ref{L2}. Taking $k=1$ in \eqref{E9}, the only $e\in E$ with $e<1$ is $e=0$, so that $b_1=-t_n$. It remains to prove \eqref{E1} and \eqref{alpha_n}.

$\bullet$ {\bf Upper bound for $\alpha_n$:}
All coefficients of $A_n$ are $1,t_n,t_n,1$, all in $[0,1]$. Consider $B_n$. Then \eqref{E9} gives \[b_k-[k\in E]=-\dfrac{tS_k(\theta^*)}{\sin \theta^*}.\]
Using Lemma \ref{L6} with the notice that $\sin \theta^*\geq \dfrac{\sqrt{3}}{2}$ (this is because $\theta^*\in (\pi/3,\pi/2)$), we have 

\[\begin{split}\max\limits_{0\leq k\leq 2m}\left|\dfrac{tS_k(\theta^*)}{\sin \theta^*}\right|&\leq \dfrac{t}{\sin \theta^*}\left(\dfrac{\sqrt{3}}{2}+\dfrac{20}{m}\right)\\
&=t\cdot \dfrac{\dfrac{\sqrt{3}}{2}}{\sin\theta^*}+\dfrac{t}{\sin\theta^*}\cdot\dfrac{20}{m}\\
&<t\left(1+\dfrac{2}{\sqrt{3}}\cdot \dfrac{20}{m}\right)\\
&<t\left(1+\dfrac{24}{m}\right).\end{split}\]
Since $[k\in E]\in \{0,1\}$, it follows that all coefficients of $B_n$ lie in $[-\alpha,1+\alpha]$ for all $\alpha\geq t\left(1+\dfrac{24}{m}\right)$. 

$\bullet${\bf Lower bound for $\alpha_n$:} The inequality $\alpha_n\geq t_n$ follows immediately from Lemma \ref{Pro1}.

$\bullet$ {\bf Asymptotics:} Combining \[t_n\leq \alpha_n\leq t_n\left(1+\dfrac{24}{m}\right)\]
with \[|t_n-\dfrac{3}{m^2}|\leq \dfrac{7}{m^4},\]
we obtain that 
\[\begin{split}\alpha_n&=\left(\dfrac{3}{m^2}+O\left(\dfrac{1}{m^4}\right)\right)\left(1+O\left(\dfrac{1}{m}\right)\right)\\
&=\dfrac{3}{m^2}+O\left(\dfrac{1}{m^3}\right)\end{split}.\]

\section{Proof of Theorem \ref{T2}}
Given $\epsilon>0$, choose $n$ such that $12m^{-2}<\epsilon$, where $m=6n+3\geq 9$. Take $c(x)=C_n(x)$, $a(x)=A_n(x)$, and $b(x)=B_n(x)$. By Theorem \ref{T1} and \eqref{E8},\[
\alpha_n\leq \left(1+\dfrac{24}{m}\right)t_n\leq \left(1+\dfrac{24}{9}\right)\cdot \dfrac{3.1}{m^2}<\dfrac{12}{m^2}.
\]
Therefore, $\alpha_n<\epsilon$. Now $c(x)=a(x)b(x)$ is a factorization in monic real polynomials with $a(0)=b(0)=1$, all coefficients in $[-\epsilon,1+\epsilon]$, and the coefficient of $x$ in $b(x)$ is $-t_n\not \in \{0,1\}$.
It follows from \eqref{alpha_n} that \[\lim\limits_{n\rightarrow \infty}{(6n+3)^2\alpha_n}=3.\]
Taking logarithms gives  
\[\lim\limits_{n\rightarrow\infty}\dfrac{\log \alpha_n}{\log (6n+3)}=-2.\]
It follows that \[\lim\limits_{n\rightarrow\infty}\dfrac{\log \alpha_n}{\log (12n+9)}=-2.\]
\begin{rem}The polynomial $C_n(x)$ in Theorem \ref{T1} differs from $C_n(x)$ used by Hare \cite{Hare} by only a single term $x^{6n+7}$.
\end{rem}
\begin{rem} Another proof of Lemma \ref{L3} is to use the generating function of Chebyshev polynomials of the second kind (see \cite{Ri}):
\[\sum\limits_{j\geq 0}U_j(u)x^j=\dfrac{1}{1-2ux+x^2}.\]
\end{rem}
\section{Acknowledgment} Part of this work was finished during the 4th International Mathematics Summer Camp (IMSC26) co-sponsored by Beijing Institute
of Mathematical Sciences and Applications (BIMSA) and Yau Mathematical Sciences Center
(YMSC), Tsinghua University. Nguyen Xuan Tho would like to thank the organizers of the camp for their support and hospitality. Pham Lan Huong would like to thanks Vietnam Institute for Advanced Study in Mathematics (VIASM) for their support and funding from 15-8-2026 to 15-9-2026 and from 15-10-2026 to 15-11-2026.

\end{document}